\newtheorem*{thm}{Theorem}%[section]
\newtheorem*{prop}{Proposition}
\theoremstyle{remark}
\newtheorem*{rmk}{Remark}
\renewcommand{\rm}[1]{\mathrm{#1}}
\newcommand{\bbT}{\mathbf{T}}
\newcommand{\bbZ}{\mathbf{Z}}
\newcommand{\B}{\mathcal{B}}
\newcommand{\s}{\sigma}
\newcommand{\ol}[1]{\overline{#1}}
\begin{document}

\title{Non-convergence of some non-commuting double ergodic averages}

\author{Tim Austin}
\date{}
\address{Mathematics Institute, Zeeman Building, University of Warwick, Coventry CV4 7AL, United Kingdom}
\email{tim.austin@warwick.ac.uk}
% It is required to enter 2020 MSC.
\subjclass{Primary: 37A30; Secondary: 47A35, 28A05, 37A44.}
% Please provide minimum  5 keywords.
% \keywords{Entropy ...???}

\maketitle

\begin{abstract}
Let $S$ and $T$ be measure-preserving transformations of a probability space $(X,\B,\mu)$. Let $f$ be a bounded measurable functions, and consider the integrals of the corresponding `double' ergodic averages:
\[\frac{1}{n}\sum_{i=0}^{n-1} \int f(S^ix)f(T^ix)\ d\mu(x) \qquad (n\ge 1).\]
We construct examples for which these integrals do not converge as $n\to\infty$.  These include examples in which $S$ and $T$ are rigid, and hence have entropy zero, answering a question of Frantzikinakis and Host.

Our proof begins with a corresponding construction for orthogonal operators on a Hilbert space, and then obtains transformations of a Gaussian measure space from them.
\end{abstract}

Let $S$ and $T$ be measure-preserving transformations of a probability space $(X,\B,\mu)$. Let $f$ and $g$ be bounded measurable functions.  The corresponding `double' ergodic averages are the functions
\begin{equation}\label{eq:aves}
\frac{1}{n}\sum_{i=0}^{n-1} f(S^ix)g(T^ix) \qquad (x \in X,\ n\ge 1).
\end{equation}
These are examples of `multiple' ergodic averages.  They lie beyond the setting of the classical ergodic theorems.  Such averages appear in Furstenberg's ergodic theoretic proof of Szemer\'edi's theorem~\cite{Fur77}, and again in his proof with Katznelson of the multi-dimensional Szemer\'edi theorem~\cite{FurKat78}.

If the transformations $S$ and $T$ commute, then the averages in~\eqref{eq:aves} converge in $L^2$~\cite{ConLes84}.  Convergence of related averages in $L^2$ has been studied in many other works as well, culminating in Walsh's proof of convergence in $L^2$ for a large class of multiple averages whenever the transformations involved generate a nilpotent group~\cite{Walsh12}.

By contrast, even if $S$ and $T$ commute, almost sure convergence remains unknown except under various additional assumptions.  Most famously, in~\cite{Bou88} Bourgain proves the almost sure convergence of~\eqref{eq:aves} for any $S$ when $T = S^2$.  The more recent papers~\cite{HuaShaYe19} and~\cite{DonSun18} contain various other pointwise convergence results, such as for multiple ergodic averages when the transformations commute and generate a distal action.  Those papers also contain a more complete description of the literature.

The pairs of transformations that arise from additive combinatorics via Furstenberg's correspondence principle typically do commute, but it is natural to ask whether convergence could be deduced without this assumption. In general, the answer is No: Furstenberg offers a simple example in~\cite[p40]{Fur81}, and several others have been presented since then.

However, positive results have been found under a variety of extra assumptions.  Most of these fall into one of the following categories:
\begin{itemize}
\item[i.] If one of the transformations, say $S$, has a special structure, then the resulting sequences $(f(S^nx))_n$ for $x \in X$ might be especially simple in ways that help to prove convergence.  For example, convergence holds both in $L^2$ and pointwise almost surely if $S$ has quasi-discrete spectrum.  This follows by the methods in~\cite{LesRitdelaRue03}, which addresses related but different questions around `weak disjointness'.
\item[ii.] If $S$ and $T$ are `sufficiently different', then the sequences are forced to be so unalike that the averages converge because of some unavoidable long-run cancellations.  For example, the averages must converge almost surely to the product of $\int f$ and $\int g$ in case $S$ and $T$ are ergodic and disjoint~\cite{Ber85}, and also when the individual spectral types of $S$ and $T$ are mutually singular~\cite{Ass05}.  In the latter paper Assani also proves a relative version of that result over any factors of $S$ and $T$ for which the relevant convergence is already known.
\item[iii.] In a different direction, one can generalize the averages in~\eqref{eq:aves} by considering expressions of the form
\begin{equation}\label{eq:non-lin}
\frac{1}{n}\sum_{i=0}^{n-1} f(S^{a(i)}x)g(T^{b(i)}x) \qquad (x \in X,\ n\ge 1)
\end{equation}
for positive-integer-valued functions $a$ and $b$ that may be non-linear.  Convergence results are known for various specific choices of such functions.  Some of the strongest are in the papers~\cite{Fra22} and~\cite{FraHos23}, which also include a more complete review of the recent literature.  When comparing the averages~\eqref{eq:aves} with this generalized setting, we refer to them as the `linear' and `non-linear' cases respectively.  The paper~\cite{FraHos23} also extends some of the older counterexamples to the non-linear case.
\end{itemize}

Contrasting with (i) above, in all of the previously published examples of non-convergence in the linear case, both $S$ and $T$ are isomorphic to Bernoulli shifts.  This leaves a gap between the `highly random' transformations that appear in counterexamples and the `highly structured' ones for which the positive results in (i) are known.

The first main result of~\cite{FraHos23} sheds some light into this gap.  It asserts that the averages~\eqref{eq:non-lin} do converge in $L^2$ when $a(n)$ is linear, $b(n)$ is an integer polynomial of degree at least $2$, and $T$ has entropy zero.  In that paper, Frantzikinakis and Host ask explicitly whether a counterexample in the linear case can be found in which either $S$ or $T$ has entropy zero.

From the side of counterexamples, another recent work~\cite{HuaShaYe23} provides some of these with entropy zero when $a(n)$ and $b(n)$ are both integer polynomials of degree at least $5$.

This note provides a new construction of counterexamples to convergence in the linear case of the averages in~\eqref{eq:aves} (and even of the integrals of those averages).  In these examples, $S$ and $T$ are both isomorphic to an arbitrary ergodic Gaussian system.  Referring to $(X,\B,\mu,S)$ for definiteness, this means that it is ergodic and that there is a function $f \in L^2(\mu)$ with the following properties:
\begin{itemize}
\item $\int f = 0$ and $\|f\|_2 = 1$;
\item the process $(f\circ S^n)_n$ has a Gaussian joint distribution and generates the whole sigma-algebra $\B$ modulo $\mu$.
\end{itemize}
These systems are a classical source of examples in ergodic theory: see, for instance,~\cite[Section 8.2 and Chapter 14]{CFS}. They include examples that are rigid and hence have zero entropy.

\begin{thm}
Let $(X,\B,\mu,S)$ be an ergodic Gaussian system and let $f \in L^2(\mu)$ have the properties above.  Then there is another transformation $T$ on the same space, isomorphic to $S$, such that
\begin{itemize}
\item[i.] the process $(f\circ T^n)_n$ is Gaussian with the same distribution as $(f\circ S^n)_n$, and it also generates $\B$ modulo $\mu$, and
\item[ii.] the averages
\[\frac{1}{n}\sum_{i=0}^{n-1}\int f(S^ix)f(T^ix)\ d\mu(x) \qquad (n\ge 1)\]
have limit supremum equal to $1$ and limit infimum equal to $-1$.
\end{itemize}
Examples of $S$ and $T$ arising here can be rigid, and hence have zero entropy.
\end{thm}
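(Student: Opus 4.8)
The plan is to push the whole question through the Gaussian functor, where it becomes a question about a single orthogonal operator on the first chaos, and then to settle that by an explicit recursive construction.

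\emph{Reduction to orthogonal operators.} Recall the Gaussian functor $\G$: to a real separable Hilbert space $H$ it assigns a probability space $(X_H,\B_H,\mu_H)$ with a linear isometric embedding $h\mapsto\xi_h$ of $H$ onto the closed span of all centred Gaussians in $L^2(\mu_H)$, satisfying $\int\xi_h\xi_{h'}\,\d\mu_H=\langle h,h'\rangle$ and $\B_H=\sigma(\xi_h:h\in H)$; to each orthogonal $V$ on $H$ it assigns a measure-preserving $\G V$ with $\xi_h\circ\G V=\xi_{V^{-1}h}$; and $\G$ is functorial, so that an orthogonal isomorphism $H\to H'$ induces an isomorphism of the resulting Gaussian systems conjugating the $\G$-images. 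In the situation of the theorem let $H$ be the closed real span of $(f\circ S^n)_n$, let $U$ be the orthogonal restriction to $H$ of the Koopman operator of $S$, and set $v:=f$; since $(f\circ S^n)_n$ is Gaussian and generates $\B$, the system is isomorphic to $(X_H,\B_H,\mu_H,\G U)$ by an isomorphism carrying $f$ to $\xi_v$, so we may assume $X=X_H$, $S=\G U$, $f=\xi_v$. It now suffices to produce an orthogonal $\Phi$ on $H$ with $\Phi v=v$ such that, putting $V:=\Phi^{-1}U\Phi$ and $T:=\G V$, the averages $\tfrac1n\sum_{i=0}^{n-1}\langle\Phi U^{i}v,U^{i}v\rangle$ have limit supremum $1$ and limit infimum $-1$: indeed $\G\Phi$ conjugates $\G V$ to $\G U$, so $T\cong S$; the process $(f\circ T^n)_n=(\xi_{V^{-n}v})_n$ is Gaussian with covariances $\langle v,V^{i-j}v\rangle=\langle\Phi v,\Phi V^{i-j}v\rangle=\langle v,U^{i-j}v\rangle$, matching those of $(f\circ S^n)_n$; it generates $\B$ because $v$ is cyclic for $V$ (as for $U$, using $\Phi v=v$); the averages just displayed equal $\tfrac1n\sum_{i=0}^{n-1}\int f(S^ix)f(T^ix)\,\d\mu$ up to replacing $U$ by $U^{-1}$, which changes nothing in what follows; and rigidity of $S$, i.e.\ $\widehat\nu(n_k)\to1$ along some $n_k\to\infty$ for the spectral measure $\nu$ of $v$, is shared by $T$, which has the same spectral data. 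Finally $|\langle\Phi U^iv,U^iv\rangle|\le1$, so $\pm1$ are the extreme conceivable values of the averages.

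\emph{Model and the operator to be built.} Identify $(H,U,v)$ with $(L^2(\nu),M_z,\mathbf1)$, where $\nu$ is a $z\mapsto\bar z$-symmetric probability measure on $\bbT$ that is non-atomic (because $S$ is ergodic, hence weakly mixing), $M_z$ is multiplication by $z$, and $\mathbf1$ the constant function. Writing $z_i:=U^iv=z^i$, one has $\langle z_i,z_j\rangle=\widehat\nu(i-j)$, and these span $H$. We must produce an orthogonal $\Phi$ on $L^2(\nu)$ commuting with complex conjugation (to respect the real structure), with $\Phi\mathbf1=\mathbf1$, such that $c_i:=\langle\Phi z_i,z_i\rangle\in[-1,1]$, $c_0=1$, and $\tfrac1n\sum_{i<n}c_i$ oscillates between $-1$ and $1$. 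Since $z_i$ and $\Phi z_i$ are unit vectors, Cesàro means close to $1$ force $c_i$, hence $\Phi z_i$, close to $z_i$ for a density-one set of $i$, and close to $-1$ force $\Phi z_i$ close to $-z_i$ for a density-one set of $i$; the tension is that, $\Phi$ being isometric, $(\Phi z_i)_i$ must reproduce \emph{exactly} the correlation sequence $\widehat\nu(i-j)$ of $(z_i)_i$.

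\emph{The construction.} Recursively choose $0=N_0<N_1<N_2<\cdots$ and buffer lengths $B_a$ with $N_{a-1}\ll B_a\ll N_a$, set $I_a=[N_{a-1},N_a)$ with core $C_a=[N_{a-1}+B_a,N_a)$, and give $I_a$ the target sign $s_a=(-1)^{a+1}$ (so $0\in C_1$, $s_1=+1$). Build $\Phi$ as a limit of partial isometries $\Phi_a$ of $\overline{\operatorname{span}}\{z_i:|i|<N_a\}$, at stage $a$ extending $\Phi_{a-1}$ so that $\Phi_a z_i=s_a z_i+e_i$ on $C_a$ with $\|e_i\|\le\eta_a$ and $\eta_a\to0$. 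Then $\|e_i\|^2=-2s_a\langle z_i,e_i\rangle$, so $c_i=s_a+\langle z_i,e_i\rangle$ is within $\eta_a^2/2$ of $s_a$ on $C_a$; and since the cores $C_{a'}$ ($a'\le a$) exhaust $[0,N_a)$ up to $o(N_a)$ points while $|C_a|\sim N_a$, one gets $\tfrac1{N_a}\sum_{i<N_a}c_i\to s_a$ along each parity of $a$, forcing limit supremum $1$ and limit infimum $-1$ (and no more, by boundedness). The delicate point is the existence of the $\Phi_a$: for $i\in C_a$, $j\in C_b$ with $b>a$ and $s_a\ne s_b$, isometry forces $|\widehat\nu(i-j)|=O(\eta_a)$, and more precisely one needs $\sum_i|\widehat\nu(i-j)|^2$ to be small over all earlier opposite-sign core indices $i$, so that a short correction $e_j$ can carry these cross-correlations inside the (infinite-dimensional, since $\nu$ is non-atomic) orthogonal complement of the already-defined image. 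This is arranged using Wiener's theorem: for non-atomic $\nu$, $\tfrac1{2M+1}\sum_{|k|\le M}|\widehat\nu(k)|^2\to0$, so the "bad'' frequency sets $\{k:|\widehat\nu(k)|\ge\eps\}$ all have density zero; choosing each $N_a,B_a$ large enough relative to all prior data and letting $\eta_a\to0$, one confines the newly created set of cross-differences $i-j$ to a frequency window on which $\widehat\nu$ has vanishing $\ell^2$-mass, at the cost only of thinning each core by an $o(1)$-density amount near block boundaries. Passing to the limit and checking $\Phi\mathbf1=\mathbf1$, commutation with conjugation, and that $\Phi$ extends to an orthogonal operator on $H=\overline{\operatorname{span}}\{z_i:i\in\bbZ\}$, is then bookkeeping.

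\emph{Main obstacle.} Everything hard is in the last step: forcing $\Phi z_i$ close to $+z_i$ on long stretches and to $-z_i$ on other long stretches while $(\Phi z_i)_i$ is rigidly pinned to the correlation sequence $\widehat\nu$ of $(z_i)_i$ — a constraint that for rigid (indeed any non-Rajchman) $\nu$ rules out the trivial block sign-flip that works when $\nu$ is Lebesgue. The way through is to exploit the sparseness of the bad frequencies of a non-atomic measure (Wiener) and to run a recursion in which the block lengths, buffer lengths, and error tolerances are chosen with foresight.
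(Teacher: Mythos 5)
Your reduction to a statement about a single orthogonal operator on the first chaos (via the Gaussian functor, with $T$ the Gaussian extension of a conjugate $V=\Phi^{-1}U\Phi$ and $\Phi v=v$) is sound and is essentially the reduction the paper makes. The problem is the operator-level construction, which you rightly call the heart of the matter but then do not actually carry out. Prescribing $\Phi z_i=s_az_i+e_i$ vector-by-vector only yields an isometry if \emph{all} pairwise inner products among the images are preserved \emph{exactly}; this is a heavily coupled quadratic system in the corrections, and the appeal to Wiener's lemma does not solve it. Concretely: (i) Wiener only controls the Ces\`aro average of $|\hat\nu(k)|^2$, so the indices $j$ that must be excised from a core (those with $\sum_{i}|\hat\nu(j-i)|^2$ not small over earlier opposite-sign cores) are spread throughout the block --- for rigid $\nu$ they sit at translates of near-rigidity times by every earlier core index --- not ``near block boundaries''; (ii) the minimal norm of a correction $e_j$ matching the required cross-correlations with the previously defined images is governed by the inverse Gram matrix of that family, and $\{z^i\}$ is very far from a Riesz system precisely in the rigid case (along a rigidity sequence the $z^i$ are nearly parallel), so smallness of $\sum_i|\hat\nu(j-i)|^2$ alone does not give $\|e_j\|\le\eta_b$; (iii) even after matching those cross-correlations, the components of the new images in the orthocomplement of the old span have a \emph{forced} Gram matrix which must be positive semidefinite --- this compatibility is nowhere checked; and (iv) $\Phi$ must also be defined coherently on buffer indices, excised indices and negative powers, where the $z_i$ are not free parameters. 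None of this is ``bookkeeping''; it is exactly the content of the theorem, and your sketch gives no mechanism that guarantees an exact orthogonal $\Phi$ exists with the stated approximate block-sign behaviour.

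The paper sidesteps the entire Gram-matrix problem by never prescribing images of individual vectors: it sets $V:=WUW$ where $W=1-2Q_2-2Q_4-\cdots$ is an explicit reflection adapted to the filtration $N_k=\ol{\rm{span}}\{U^i\xi:\,0\le i<n_k\}$, with $Q_k$ the projection onto $N_k\cap N_{k-1}^\perp$ and with $n_{k+1}$ chosen so that, by weak mixing, $\frac{1}{n_{k+1}}\sum_{i<n_{k+1}}\|P_kU^i\xi\|<\frac{1}{k+1}$. Orthogonality of $V$, conjugacy to $U$, $W\xi=\xi$, and the preservation of the correlation sequence are then automatic, while for $i<n_k$ the exact decomposition $U^i\xi=P_{k-1}U^i\xi+Q_kU^i\xi$ together with $Q_kW=(-1)^{k-1}Q_k$ gives $\langle U^i\xi,V^i\xi\rangle=(-1)^{k-1}\|Q_kU^i\xi\|^2+O(\|P_{k-1}U^i\xi\|)$, and the averages along $n_k$ oscillate between $\pm1$. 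If you want to rescue your spectral-model scheme you would need to prove the exact-isometry existence step --- e.g.\ by producing $\Phi$ as a genuine operator (a reflection or conjugation) rather than by perturbative prescription --- at which point you are reconstructing the paper's argument; note also that weak mixing of $U$ (equivalently $\frac1n\sum_{i<n}\|PU^i\xi\|\to0$ for finite-rank $P$) is the property that does the work, rather than non-atomicity of $\nu$ via Wiener applied frequency-by-frequency.
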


\begin{rmk}
After a version of the present note first appeared, Xiangdong Ye communicated to me that the authors of~\cite{HuaShaYe23} can also adapt their construction to the linear case: see~\cite{HuaShaYe24}.
\end{rmk}

The ergodic theory of Gaussian systems is relatively tractable because it depends only on their spectra.  As a result, the heart of our theorem is a corresponding fact about orthogonal operators on real Hilbert spaces.  We establish this first, and then complete the proof of the theorem by constructing a Gaussian measure space.

\begin{prop}
Let $H$ be a real Hilbert space, let $U$ be a weakly mixing orthogonal transformation of it, and let $\xi$ be a star-cyclic unit vector for $U$ (that is, the linear span of $\{U^n\xi:\ n \in \bbZ\}$ is dense in $H$).  Then there is another orthogonal transformation $V$, conjugate to $U$, such that
\begin{itemize}
\item[i$^\prime$.] $\xi$ is also star-cyclic for $V$ and satisfies $\langle U^i\xi,\xi\rangle = \langle V^i\xi,\xi\rangle$ for every $i$, and
\item[ii$^\prime$.] the averages
\begin{equation}\label{eq:ave}
\frac{1}{n}\sum_{i=0}^{n-1} \langle U^i\xi,V^i\xi\rangle \qquad (n\ge 1)
\end{equation}
have limit supremum equal to $1$ and limit infimum equal to $-1$.
\end{itemize}
\end{prop}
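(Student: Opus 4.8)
I would start from the remark that the requirements in (i$^\prime$) pin down the pointed system $(H,V,\xi)$ up to isomorphism by the spectral theorem: $\xi$ is star-cyclic for both $U$ and $V$ and the sequence $\langle U^i\xi,\xi\rangle=\langle V^i\xi,\xi\rangle$ is prescribed, so $(H,V,\xi)\cong(H,U,\xi)$, which is to say $V=\Phi U\Phi^{-1}$ for some orthogonal $\Phi$ with $\Phi\xi=\xi$; conversely any such $\Phi$ produces a $V$ satisfying (i$^\prime$). So the whole problem reduces to choosing $\Phi$ for which (ii$^\prime$) holds. For such a $\Phi$ one has $V^i\xi=\Phi U^i\xi$, hence $\langle U^i\xi,V^i\xi\rangle=\langle U^i\xi,\Phi U^i\xi\rangle$, and this number lies in $[-1,1]$ by Cauchy--Schwarz since $\|U^i\xi\|=1$; thus the averages in~\eqref{eq:ave} automatically lie in $[-1,1]$, and it suffices to find times $n\to\infty$ along which they tend to $1$ and other times along which they tend to $-1$.

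The plan is then to build $\Phi$ as an explicit reflection adapted to a rapidly increasing sequence $1\le N_0<N_1<N_2<\cdots$, to be fixed as the argument proceeds. Put $D_k:=\mathrm{span}\{U^j\xi:\ 0\le j<N_k\}$, a finite-dimensional increasing sequence of subspaces, let $E_k:=D_{k+1}\ominus D_k$, and let $D_\infty:=\overline{\bigcup_k D_k}$, so that $H=D_0\oplus\bigoplus_{k\ge 0}E_k\oplus D_\infty^\perp$. With signs $\eps_k:=(-1)^k$, let $\Phi$ act as the identity on $D_0$ and on $D_\infty^\perp$ and as $\eps_k$ times the identity on each $E_k$. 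This $\Phi$ is an orthogonal, self-inverse transformation of $H$ which fixes $\xi=U^0\xi\in D_0$ and which preserves every $D_k$, so $V:=\Phi U\Phi^{-1}$ already satisfies (i$^\prime$). The role of the condition $\Phi D_k=D_k$ is this: for $i\in[N_k,N_{k+1})$, writing $U^i\xi=p_i+q_i$ with $p_i:=P_{D_k}U^i\xi\in D_k$ and $q_i\in E_k$, the two cross terms in $\langle U^i\xi,\Phi U^i\xi\rangle$ vanish (since $\Phi p_i\in D_k$ and $q_i\in E_k$ while $D_k\perp E_k$), leaving
\[\langle U^i\xi,\Phi U^i\xi\rangle \;=\; \langle p_i,\Phi p_i\rangle + \eps_k\|q_i\|^2 \;=\; \eps_k + O(\|p_i\|^2),\]
because $|\langle p_i,\Phi p_i\rangle|\le\|p_i\|^2$ and $\|q_i\|^2=1-\|p_i\|^2$.

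Everything therefore comes down to arranging, for each $k$, that $\|P_{D_k}U^i\xi\|$ is small for all but a vanishing proportion of $i\in[N_k,N_{k+1})$, and this is exactly the point where I would use weak mixing. For an orthonormal basis $w_1,\dots,w_d$ of the finite-dimensional space $D_k$ we have $\|P_{D_k}U^i\xi\|^2=\sum_l|\langle U^i\xi,w_l\rangle|^2$, and weak mixing gives $\frac1n\sum_{i<n}|\langle U^i\xi,w_l\rangle|^2\to 0$ for each $l$, hence $\frac1n\sum_{i<n}\|P_{D_k}U^i\xi\|^2\to 0$; by Markov's inequality the set of $i<n$ with $\|P_{D_k}U^i\xi\|>\delta$ has size $o(n)$ for every $\delta>0$. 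Since $D_k$ depends only on $N_0,\dots,N_k$, one can then choose $N_{k+1}$ inductively so large that, with a suitable $\delta_k\downarrow 0$, this exceptional set occupies a proportion at most $2^{-k}$ of $[0,N_{k+1})$ and also $N_k/N_{k+1}\le 2^{-k}$. Then the block $[N_k,N_{k+1})$ fills up almost all of $[0,N_{k+1})$, the displayed identity shows $\langle U^i\xi,\Phi U^i\xi\rangle$ is within $o(1)$ of $\eps_k$ for almost all $i$ in that block, and averaging shows the average in~\eqref{eq:ave} at time $N_{k+1}$ tends to $\eps_k=(-1)^k$ as $k\to\infty$. Together with the a priori containment in $[-1,1]$ this gives limit supremum $1$ and limit infimum $-1$, proving (ii$^\prime$). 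I expect the main work to be this last step of fixing the block sizes so that all the small errors genuinely accumulate to $o(1)$; by contrast the algebraic claims---that $V$ is conjugate to $U$, that $\xi$ stays star-cyclic for $V$ with the same autocorrelations, and that the cross terms vanish---are immediate from $\Phi$ being orthogonal, preserving each $D_k$, and fixing $\xi$.
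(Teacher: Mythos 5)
Your proposal is correct and takes essentially the same route as the paper: your $\Phi$, which flips the sign on alternate orthogonal differences $E_k$ of the nested orbit-span subspaces $D_k$ and fixes everything else, is exactly the paper's reflection $W$, and forming $V=\Phi U\Phi^{-1}$ and evaluating the averages along a rapidly growing sequence of times chosen via the weak-mixing Ces\`aro estimate is the same argument. The only differences are bookkeeping---you use Markov's inequality, an exceptional set, and an explicit ratio condition $N_k/N_{k+1}\le 2^{-k}$, where the paper directly bounds the Ces\`aro averages of $\|P_{k-1}U^i\xi\|$ in its recursive choice of times---which does not change the substance.
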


\begin{proof}\emph{Step 1.}\quad Since the operator $U$ is weakly mixing, we have
\begin{equation}\label{eq:wk-mix}
\frac{1}{n}\sum_{i=0}^{n-1} \|PU^i\xi\| \to 0 \quad \hbox{as}\ n\to\infty
\end{equation}
whenever $P$ is an orthogonal projection with finite dimensional range.

\vspace{7pt}

\emph{Step 2.}\quad Choose a sequence of positive integers $n_1 < n_2 < \dots$ by the following recursion. Set $n_1 := 1$.  Once $n_1$, \dots, $n_k$ have been chosen, let
\[N_k := \ol{\rm{span}}\{U^i\xi:\ 0 \le i < n_k\},\]
and let $P_k$ be the orthogonal projection onto $N_k$.  Since $N_k$ has finite dimension, the convergence in~\eqref{eq:wk-mix} holds for this projection. Choose $n_{k+1} > n_k$ so large that
\begin{equation}\label{eq:wk-mix-2}
\frac{1}{n_{k+1}}\sum_{i=0}^{n_{k+1}-1}\|P_kU^i\xi\| < \frac{1}{k+1}.
\end{equation}

\vspace{7pt}

\emph{Step 3.}\quad Let $N_k$ and $P_k$ be as defined in Step 2.  Observe that $N_k$ increases with $k$.  Define a pairwise orthogonal sequence of subspaces $M_1$, $M_2$, \dots by
\[M_k := N_k\cap N_{k-1}^\perp,\]
and let $Q_k$ be the orthogonal projection onto $N_k$.  It follows that $N_k$ is the orthogonal sum of $M_1$, $M_2$, \dots, $M_k$, or equivalently
\[P_k = P_{k-1} + Q_k = Q_1 + \cdots + Q_k \quad \forall k\ge 1,\]
setting $P_0 := 0$ for convenience.

Let
\[W := 1 - 2Q_2 - 2Q_4 - \cdots.\]
This is an orthogonal operator: it reverses all the vectors in $M_2$, $M_4$, \dots, and it fixes any vector orthogonal to these subspaces.

Finally, let $V:= WUW$.

\vspace{7pt}

\emph{Step 4.}\quad Since $\xi$ lies in $M_1$, it is orthogonal to all of $M_2$, $M_4$, \dots, and so
\begin{equation}\label{eq:V-and-U-on-x}
V^i\xi = WU^i\xi \quad \forall i\in \bbZ.
\end{equation}
Therefore, since $\xi$ is star-cyclic for $U$ and $W$ is an isomorphism, $\xi$ is also star-cyclic for $V$.  In addition,~\eqref{eq:V-and-U-on-x} implies that
\[\langle V^i\xi,\xi\rangle = \langle WU^i\xi,\xi\rangle = \langle U^i\xi,W\xi\rangle = \langle U^i\xi,\xi\rangle.\]
This proves property (i$^\prime$).

\vspace{7pt}

\emph{Step 5.}\quad It remains to prove (ii$^\prime$).  Intuitively,~\eqref{eq:wk-mix-2} and~\eqref{eq:V-and-U-on-x} combine into the following behaviour:
\begin{quote}
For $i$ up to the time $n_1$, $V^ix$ agrees with $U^ix$.  Beyond that, for `most' $i$ up to the much later time $n_2$, `most of' the vector $V^ix$ is the reflection of `most of' $U^ix$ through the origin.  Beyond that, for `most' $i$ up to the still much later time $n_3$, `most of' the vector $V^ix$ agrees with $U^ix$ again.  These behaviours continue to alternate after that.
\end{quote}

To explain this precisely, let $i < n_k$.  Then $U^i\xi$ lies in $N_k$, and so
\begin{equation}\label{eq:full-decomp}
U^i\xi = P_kU^i\xi = P_{k-1}U^i\xi + Q_k U^i\xi.
\end{equation}
As a result, when $n = n_k$, the average in~\eqref{eq:ave} is equal to
\begin{equation}\label{eq:decompose}
\frac{1}{n_k}\sum_{i=0}^{n_k-1} \langle P_{k-1}U^i\xi,P_{k-1}V^i\xi\rangle + \frac{1}{n_k}\sum_{i=0}^{n_k-1} \langle Q_kU^i\xi,Q_kV^i\xi\rangle.
\end{equation}
The first of these two sums is controlled by~\eqref{eq:wk-mix-2}:
\[\Big|\frac{1}{n_k}\sum_{i=0}^{n_k-1} \langle P_{k-1}U^i\xi,P_{k-1}V^i\xi\rangle\Big| \le \frac{1}{n_k}\sum_{i=0}^{n_k-1} \|P_{k-1}U^i\xi\| < \frac{1}{k}.\]
To estimate the second sum, observe that
\[Q_kW = \left\{\begin{array}{ll}Q_k &\quad \hbox{if $k$ is odd}\\ -Q_k &\quad \hbox{if $k$ is even.}\end{array}\right.\]
Combined with~\eqref{eq:V-and-U-on-x}, this gives
\[Q_kV^i\xi = Q_kW U^i W\xi = (-1)^{k-1}Q_k U^i \xi.\]
Now we insert these ingredients back into~\eqref{eq:decompose} and apply~\eqref{eq:full-decomp} and then~\eqref{eq:wk-mix-2}:
\begin{align*}
\frac{1}{n_k}\sum_{i=0}^{n_k-1} \langle U^i\xi,V^i\xi\rangle &= O(1/k) + \frac{(-1)^{k-1}}{n_k}\sum_{i=0}^{n_k-1} \|Q_kU^i\xi\|^2\\
&= O(1/k) + \frac{(-1)^{k-1}}{n_k}\sum_{i=0}^{n_k-1} (1 - \|P_{k-1}U^i\xi\|^2)\\
&= O(1/k) + (-1)^{k-1}(1 - O(1/k)).
\end{align*}
This approaches $1$ (resp. $-1$) as $k$ grows along odd (resp. even) values.
\end{proof}

\begin{proof}[Proof of the theorem]
Let $H$ be the closed span of $\{f\circ S^n:\ n \in \bbZ\}$ in $L^2(\mu)$, let $\xi := f$, and let $U$ be the restriction of the Koopman operator of $S$ to $H$.  Since $S$ is an ergodic Gaussian dynamical system, it is actually weakly mixing~\cite[Theorem 14.2.1]{CFS}, and hence so is the restriction $U$.  We may therefore apply the proposition to find another orthogonal transformation $V$ conjugate to $U$ which has properties (i$^\prime$) and (ii$^\prime$).

Because the process $(f\circ S^n)_n$ is Gaussian and generates $\B$, the measure space $(X,\B,\mu)$ is canonically isomorphic to the Gaussian measure space constructed from $H$: see, for instance,~\cite[Appendices B--D]{KecGAEGA}.  In this picture, the whole of $L^2_0(\mu)$ is identified with the symmetric Fock space over $H$, and $H$ itself appears as a linear subspace called the `first chaos'.  Any orthogonal transformation of the first chaos extends canonically to a measure-preserving transformation of $(X,\B,\mu)$~\cite[Appendix E]{KecGAEGA}.  In the case of $U$, this extension is the transformation $S$ that we started with.  The extension of $V$ is a new transformation $T$, and any orthogonal operator that conjugates $U$ and $V$ extends to an isomorphism between $S$ and $T$.

All transformations obtained from orthogonal operators in this way preserve the first chaos $H$.  Therefore, since $f$ lies in $H$, the images $f\circ T^n$ stay in $H$, and hence define a Gaussian process.  Now property (i) follows from property (i$^\prime$) because $f$ is star-cyclic under $V$ as well as under $U$, and because
\[\int f(T^ix)f(x)\,d\mu(x) = \langle V^i\xi,\xi\rangle = \langle U^i\xi,\xi\rangle = \int f(S^ix)f(x)\,d\mu(x) \quad \forall i \in \bbZ.\]
Similarly, since $f\circ S^n$ and $f\circ T^n$ both lie in $H$ for all $n$, the integrals in assertion (ii) are simply the inner products in assertion (ii$^\prime$) of the proposition, and so (ii$^\prime$) implies (ii).

Finally, the distribution of an ergodic stationary centred Gaussian process is uniquely specified by its spectral measure $\s$, and this can be any atomless finite Borel measure on $\bbT$ that is symmetric under complex conjugation~\cite[Section 8.2]{CFS}.  Having chosen $\s$ for our process $(f\circ S^n)_n$, the maximal spectral type of the whole transformation $S$ on $L^2_0(\mu)$ is given by
\[\s + \frac{1}{2!}\s\ast \s + \frac{1}{3!}\s\ast\s\ast\s + \cdots\]
(see, for instance,~\cite[Theorem 14.3.1]{CFS}).  In particular, if $\s$ is atomless but has a rigidity sequence, such as the measures discussed in~\cite[Sections 7.14--18]{NadSTDS}, then $S$ has the same rigidity sequence, and then so does $T$ because they are isomorphic.  In this case their maximal spectral type has no absolutely continuous component, and so they also have entropy zero.
\end{proof}

\begin{rmk}
The function $f$ obtained above is not bounded.  However, given that those averages of inner products fail to converge, the same must be true for sufficiently high truncations of $f$, so bounded examples are also available.
\end{rmk}

\begin{rmk}
Let $a(1) < a(2) < \dots $ be positive integers and let $b(i) := a(i)$.  The theorem and proposition above can be extended to the resulting non-linear averages (as in~\eqref{eq:non-lin}) whenever the analog of~\eqref{eq:wk-mix} holds for the averages along the subsequence of times $(a(i))_i$, as pointed out to me by Sohail Farhangi.  If $U$ is strongly mixing, then this condition holds for any subsequence of times, and some strongly mixing examples given by our construction still have entropy zero.  On the other hand, if $a(i)$ is an increasing polynomial function of $i$, then the analog of~\eqref{eq:wk-mix} holds under the same hypotheses as in the linear case.  This is because the weak mixing of $S$ (or $U$) implies the required analog of~\eqref{eq:wk-mix} by applying the norm ergodic theorem along polynomial sequences to the weakly mixing operator $U\otimes U$.
\end{rmk}

The known examples of non-convergence of the averages~\eqref{eq:aves} provide rather diverse measure-preserving transformations in the role of $S$ (or $T$).  Many further questions about such transformations could be pursued. For example, it would be interesting to know whether \emph{any} weakly mixing transformation can appear as $S$; whether the ranks of $S$ and $T$ could help to control the averages in~\eqref{eq:aves} (Gaussian examples always have infinite rank, and so do those in~\cite{HuaShaYe24}); or which two-step distal transformations can appear.

In~\cite{FraHos23}, Frantzikinakis and Host also pose various questions about recurrence involving non-commuting transformations.  I do not think the methods of the present note come close to answering these.

\subsection*{Acknowledgements}

I am grateful to Mariusz Lema\'nczyk for bringing recent work in this area to my attention, and to Nikos Frantzikinakis, Xiangdong Ye, el Houcein el Abdalaoui, Sohail Farhangi and an anonymous referee for some helpful remarks about earlier versions of this note.

For the purpose of open access, the author has applied a Creative Commons Attribution (CC-BY) licence to any Author Accepted Manuscript version arising from this submission.

%\printbibliography

%\begin{small}
%\noindent Mathematics Institute\\ Zeeman Building\\ University of Warwick\\ Coventry CV4 7AL\\ United Kingdom\\ \href{mailto:Tim.Austin@warwick.ac.uk}{\texttt{Tim.Austin@warwick.ac.uk}}
%\end{small}

\end{document}